\renewcommand{\baselinestretch}{1}
\newcommand{\dist}{\text{\rm dist}}
\def\bt{\begin{thm}}
\def\et{\end{thm}}
\def\bl{\begin{lem}}
\def\el{\end{lem}}
\def\bd{\begin{defi}}
\def\ed{\end{defi}}
\def\bc{\begin{cor}}
\def\ec{\end{cor}}
\def\bp{\begin{proof}}
\def\ep{\end{proof}}
\def\br{\begin{rem}}
\def\er{\end{rem}}
\def\Forall{\text{ } \forall \:}
\def\d{\, \mathrm{d}}
\def\be{\begin{equation}}
\def\ee{\end{equation}}
\def\bes{\begin{equation*}}
\def\ees{\end{equation*}}
\def\bea{\begin{equation} \begin{aligned}}
\def\eea{\end{aligned} \end{equation}}
\def\beas{\begin{equation*} \begin{aligned}}
\def\eeas{\end{aligned} \end{equation*}}
\def\ba{\begin{align}}
\def\ea{\end{align}}
\def\bas{\begin{align*}}
\def\eas{\end{align*}}
\newtheorem{thm}{Theorem}[section]
\newtheorem{lem}{Lemma}[section]
\newtheorem{defi}{Definition}[section]
\newtheorem{ex}{Example}[section]
\newtheorem{prop}[thm]{Proposition}
\newtheorem{rem}{Remark}[section]
\newtheorem{cor}{Corollary}[section]
\numberwithin{equation}{section}
\numberwithin{figure}{section}
\begin{document}
\title{Local Morrey estimate in Musielak-Orlicz-Sobolev space\footnotemark[1]\footnotemark[2]}

\author[Liu]{Duchao Liu}
\address[Duchao Liu]{School of Mathematics and Statistics, Lanzhou
University, Lanzhou 730000, P. R. China} \email{liuduchao@gmail.com, liudch@lzu.edu.cn,
Tel.: +8613893289235, fax: +8609318912481}

\author[Zhao]{Peihao Zhao}
\address[Peihao Zhao]{School of Mathematics and Statistics, Lanzhou
University, Lanzhou 730000, P. R. China} \email{zhaoph@lzu.edu.cn}

\footnotetext[2]{
Research supported by the National Natural Science Foundation of
China (NSFC 11501268 and NSFC 11471147).
}

\keywords{Musielak-Sobolev space; Morrey estimate; H\"{o}lder continuity.}
\subjclass[2010]{35B65, 35B38}
\begin{abstract}
Under appropriate assumptions on the $N(\Omega)$-fucntion, locally uniform Morrey estimate is presented in the Musielak-Orlicz-Sobolev space. The assumptions include a new  increasing condition on the $x$-derivative of the Young complementary function of the $N(\Omega)$-fucntion. The conclusion applies to several important nonlinear examples frequently appeared in mathematical literature.
\end{abstract}
 \maketitle


\section{Introduction}

Vast mathematical literature describes various aspects of partial differential equations related to the elliptic type operators including variable exponent, weighted, convex and double phase cases. The examples of the mentioned cases can be found in \cite{Fan5, Fan3, Marcellini89, Marcellini91, Mingione01, Mingione15, Mingione15s} and the references therein. Musielak-Orlicz-Sobolev spaces give an abstract framework of functional analysis to cover all of the above mentioned cases. 

Applications in mathematical models dealt in the framework of Musielak-Orlicz settings start from Ball's classical paper \cite{Ball76} on elasticity, investigated in the recent paper \cite{Chelminski16, Klawe16} for the model about thermo-visco-plasticity type. We refer to \cite{Gwiazda08-1, Gwiazda08-2, Gwiazda10, Gwiazda12, Wroblewska10, Wroblewska13} for the developments arising from the non-Newtonian fluids theory and some parabolic problems within the Musielak-Orlicz settings to \cite{Gwiazda14a, Gwiazda14b}.

A highly important part of the mathematical literature in general Musielak-Orlicz-Sobolev spaces gives structural conditions on regularity analysis of the space in recent years. In a recent work \cite{Ahmida18}, Ahmida and collaborators prove the density of smooth functions in the modular topology in Musielak-Orlicz-Sobolev spaces, which extends the results of Gossez \cite{Gossez82} obtained in the Orlicz-Sobolev settings. The authors impose new systematic regularity assumption on the modular function. And this allows to study the problem of density unifying and improving the known results in Orlicz-Sobolev spaces. In the paper \cite{liu_yao_18}, under some reasonable assumptions on the $N(\Omega)$-fucntion, the De Giorgi process is presented by the authors in the framework of Musielak-Orlicz-Sobolev spaces. And as the applications, the local bounded property of the minimizer for a class of the energy functional in Musielak-Orlicz-Sobolev spaces is proved. Under similar assumptions as in \cite{liu_yao_18}, the authors in \cite{Liu18} prove the H\"{o}lder continuity of the minimizers for a class of the energy functionals in Musielak-Orlicz-Sobolev spaces.

In \cite{Fan2}, Fan developed the Sobolev type inequalities in the Musielak-Orlicz-Sobolev spaces, but he did not consider the Morrey type ones. In this paper we will consider the local Morrey type inequalities and H\"{o}lder continuity of functions in Musielak-Orlicz-Sobolev spaces. We use the increasing assumption $(\widetilde{P_5})$ (see in Section \ref{Sec3}) on the $x$-derivative of the Young complementary function, which firstly appears in mathematical literature. $(\widetilde{P_5})$ can be considered as the analogous increasing condition on $x$-derivative of the Sobolev conjugate function as in $(P_5)_*$ (see in Section \ref{Sec2}, the inequality \eqref{priost}). Fortunately and importantly, both $(\widetilde{P_5})$ and $(P_5)_*$ can be implied by the analogous increasing condition $(P_5)$ on the $x$-derivative of the original $N(\Omega)$-function, see Proposition \ref{p5} in Section \ref{Sec4}.

The paper is organized as follows. In Section \ref{Sec2}, for the readers' convenience we summarize some definitions and properties about Musielak-Orlicz-Sobolev spaces. In Section \ref{Sec3}, we give the main results of this paper, including the locally uniform Morrey estimate and H\"{o}lder continuity of functions in Musielak-Orlicz-Sobolev spaces. In Section \ref{Sec4}, we discuss an important assumptions in the theorems. In Section \ref{Sec5}, we present three examples satisfying assumptions in our theorems. These examples frequently appeared in recent mathematical literatures.

\section{The Musielak-Orlicz-Sobolev Spaces}\label{Sec2}

In this section, we list some definitions and propositions related to Musielak-Orlicz-Sobolev spaces. Firstly, we give the definition of \textit{$N$-function} and \textit{generalized $N$-function} as following.

\vspace{0.3cm}

\begin{defi}
A function $A:\mathbb{R}\rightarrow[0,+\infty)$ is called an
$N$-modular function (or $N$-function), denoted by $A\in N$, if $A$ is even and convex,
$A(0)=0, 0< A(t)\in C^0$ for $t\not=0$, and the following conditions
hold
\begin{equation*}
\lim_{t\rightarrow0+}\frac{A(t)}{t}=0\text{ and }
\lim_{t\rightarrow+\infty}\frac{A(t)}{t}=+\infty.
\end{equation*}
Let $\Omega$ be a smooth domain in $\mathbb{R}^n$. A function $A:\Omega\times\mathbb{R}\rightarrow[0,+\infty)$ is
called a generalized $N$-modular function (or generalized $N$-function), denoted by $A\in N(\Omega)$, if
for each $t\in[0,+\infty)$, the function $A(\cdot,t)$ is measurable,
and for a.e. $x\in\Omega$, we have $A(x,\cdot)\in N$.
\end{defi}

\vspace{0.3cm}

Let $A\in N(\Omega)$, the Musielak-Orlicz space $L^{A}(\Omega)$ is
defined by
\begin{equation*}
\begin{aligned}
L^{A}(\Omega)&:=\bigg\{u:\,u\text{ is a measurable real function, and
}\exists\lambda>0\\
&\quad\quad\quad\quad\quad\quad\quad\quad\quad\quad\quad\quad\text{ such that
}\int_{\Omega}A\bigg(x,\frac{|u(x)|}{\lambda}\bigg)\,\mathrm{d}x<+\infty\bigg\}
\end{aligned}
\end{equation*}
with the (Luxemburg) norm
\begin{equation*}
\|u\|_{L^{A}(\Omega)}=\|u\|_{A,\Omega}=\|u\|_{A}:=\inf\bigg\{\lambda>0:\,\int_{\Omega}A\bigg(x,\frac{|u(x)|}
{\lambda}\bigg)\,\mathrm{d}x\leq1\bigg\}.
\end{equation*}

The Musielak-Sobolev space $W^{1,A}(\Omega)$ can be defined by
\begin{equation*}
W^{1,A}(\Omega):=\{u\in L^{A}(\Omega):\,|\nabla u|\in
L^{A}(\Omega)\}
\end{equation*}
with the norm
\begin{equation*}
\|u\|_{W^{1,A}(\Omega)}=\|u\|_{1,A,\Omega}=\|u\|_{1,A}:=\|u\|_A+\|\nabla u\|_{A},
\end{equation*}
where $\|\nabla u\|_{A}:=\|\,|\nabla u|\,\|_{A}$.


\vspace{0.3cm}

\begin{defi}\label{Musi_D}
We say that $a(x,t)$ is the Musielak derivative of $A(x,t)\in N(\Omega)$ at $t$ if
for $x\in\Omega$ and $t\geq0$, $a(x,t)$ is the right-hand derivative of
$A(x,\cdot)$ at $t$; and for $x\in\Omega$ and $t\leq0$,
$a(x,t):=-a(x,-t)$.
\end{defi}

\vspace{0.3cm}

Define $\widetilde A:\Omega\times\mathbb{R}\rightarrow[0,+\infty)$
by
\begin{equation*}
\widetilde A(x,s)=\sup_{t\in\mathbb{R}}\big(st-A(x,t)\big)\text{ for
}x\in\Omega\text{ and }s\in\mathbb{R}.
\end{equation*}
$\widetilde A$ is called the \textit{complementary function} to $A$ in the
sense of Young. It is well known that if $A\in N(\Omega)$, then
$\widetilde A\in N(\Omega)$ and $A$ is also the complementary
function to $\widetilde A$.

For $x\in\Omega$ and $s\geq0$, we denote by $\widetilde a(x,s)$ the
right-hand derivative of $\widetilde{A}(x,\cdot)$ at $s$ at the same time
define $\widetilde a(x,s)= -\widetilde a(x,-s)$ for $x\in\Omega$ and
$s\leq0$. Then for $x\in\Omega$ and $s\geq0$, we have
\begin{equation*}
\widetilde a(x,s)=\sup\{t\geq0:\, a(x,t)\leq s\}=\inf\{t>0:\,a(x,t)>s\}.
\end{equation*}

\vspace{0.3cm}

\begin{prop}[See \cite{Adams, Fan1, Musielak}]\label{Aa} 
Let $A\in N(\Omega)$. The
following assertions hold:
\begin{enumerate}
\item for any $x\in\Omega$ and
any $t\in\mathbb{R}$, 
\begin{equation*}
A(x,t)\leq a(x,t)t\leq A(x,2t);
\end{equation*}
\item for any $x\in\Omega$ and
any $t>0$,
\begin{equation*}
t<A^{-1}(x,t)\widetilde A^{-1}(x,t)\leq2t;
\end{equation*}
\item $A$ and $\widetilde A$ satisfy the Young inequality
\begin{equation*}
st\leq A(x,t)+\widetilde A(x,s) \text{ for }x\in\Omega \text{ and }
s,t\in\mathbb{R}
\end{equation*}
and the equality holds if $s=a(x,t)$ or $t=\widetilde a(x,s)$.
\end{enumerate}
\end{prop}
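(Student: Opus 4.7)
The plan is to handle the three assertions in order of increasing subtlety: part (1) by an integral representation, part (3) from the Legendre transform definition, and then part (2) by applying Young's inequality to the pair of inverses.

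For part (1), I would start from the fact that $a(x,\cdot)$, being the right-hand derivative of the convex function $A(x,\cdot)$ with $A(x,0)=0$, is nondecreasing on $[0,+\infty)$ and satisfies $A(x,t)=\int_0^t a(x,\tau)\,d\tau$ for $t\ge 0$. Monotonicity of $a(x,\cdot)$ then yields
\begin{equation*}
A(x,t)=\int_0^t a(x,\tau)\,d\tau \le a(x,t)\,t = \int_t^{2t} a(x,t)\,d\tau \le \int_0^{2t} a(x,\tau)\,d\tau = A(x,2t),
\end{equation*}
and the case $t<0$ follows from the evenness of $A(x,\cdot)$ and the oddness of $a(x,\cdot)$ built into Definition \ref{Musi_D}.

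For part (3), the Young inequality is an immediate consequence of the definition $\widetilde A(x,s)=\sup_{t\in\mathbb{R}}(st-A(x,t))$: for arbitrary $t$ the supremum is at least $st-A(x,t)$, so rearranging gives $st\le A(x,t)+\widetilde A(x,s)$. For the equality case, when $s=a(x,t)$, the first-order optimality condition for the concave function $\tau\mapsto s\tau-A(x,\tau)$ is satisfied at $\tau=t$ (using that $a(x,\cdot)$ is the right derivative and $A(x,\cdot)$ is convex), so the supremum defining $\widetilde A(x,s)$ is attained at $t$; the case $t=\widetilde a(x,s)$ is symmetric, using that $A$ is the complementary function of $\widetilde A$.

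For part (2), setting $u=A^{-1}(x,t)$ and $v=\widetilde A^{-1}(x,t)$ and applying Young's inequality from part (3) gives at once
\begin{equation*}
uv \le A(x,u)+\widetilde A(x,v)=t+t=2t,
\end{equation*}
which is the upper bound. The strict lower bound $t<uv$ is the main obstacle: the equality case of Young requires $v=a(x,u)$, and one must rule this out for the specific pair coming from the inverses. I would argue by combining $A(x,u)=t$ with part (1): the inequality $A(x,u)\le a(x,u)\cdot u$ shows $a(x,u)\ge t/u$, while the identity $\widetilde A(x,a(x,u))=a(x,u)u-A(x,u)\le A(x,2u)-A(x,u)$ (from the equality case of Young plus part (1)) is strictly less than $t$ for the threshold $s=t/u$ because of the strict superlinear growth built into the definition of an $N$-function; translating back through the monotone inverse $\widetilde A^{-1}(x,\cdot)$ then forces $v>t/u$, i.e.\ $uv>t$. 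The main thing to be careful with is that the inequalities from part (1) are not always strict, so the strictness in (2) must be extracted from the $N$-function conditions $A(x,t)/t\to 0$ as $t\to 0+$ and $A(x,t)/t\to +\infty$ as $t\to +\infty$, which prevent the degenerate equality case globally.
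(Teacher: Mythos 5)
The paper offers no proof of Proposition \ref{Aa}; it is quoted from \cite{Adams, Fan1, Musielak}, so your attempt has to be measured against the standard arguments. Parts (1) and (3) and the upper bound in (2) are essentially correct and standard: the representation $A(x,t)=\int_0^t a(x,\tau)\,\mathrm{d}\tau$ together with the monotonicity of $a(x,\cdot)$ gives (1); the Legendre-transform definition gives the Young inequality, and the equality cases follow once you phrase the ``first-order condition'' via the subdifferential (recall $a(x,t)$ is only the right derivative, and equality in Young actually holds for every $s$ between the left and right derivatives at $t$, not only $s=a(x,t)$); and plugging $u=A^{-1}(x,t)$, $v=\widetilde A^{-1}(x,t)$ into Young gives $uv\le 2t$.

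The strict lower bound in (2) is where your argument has a genuine gap. You propose to control $\widetilde A$ at $s=a(x,u)$ through $\widetilde A\big(x,a(x,u)\big)=a(x,u)u-A(x,u)\le A(x,2u)-A(x,u)$ and assert that this is strictly less than $t=A(x,u)$. It never is: convexity with $A(x,0)=0$ forces $A(x,2u)\ge 2A(x,u)$, so your majorant is always $\ge t$; worse, $\widetilde A\big(x,a(x,u)\big)$ itself can exceed $t$ (take $A(x,t)=t^p/p$ with $p>2$: then $\widetilde A\big(a(u)\big)=u^p/q>u^p/p=t$), so no estimate routed through the point $a(x,u)$ can show $\widetilde A^{-1}(x,t)>t/u$. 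The standard repair is to estimate $\widetilde A$ at the threshold $s=t/u=A(x,u)/u$ directly from the supremum defining $\widetilde A$: for $w\ge u$ one has $\frac{A(x,u)}{u}w-A(x,w)\le 0$ because $A(x,w)/w$ is nondecreasing; for $0\le w\le u/2$ the expression is at most $A(x,u)/2$; and for $u/2\le w\le u$ it is at most $A(x,u)-A(x,u/2)$. Hence $\widetilde A\big(x,A(x,u)/u\big)\le\max\{A(x,u)/2,\,A(x,u)-A(x,u/2)\}<A(x,u)=t$, and since $\widetilde A(x,\cdot)$ is nondecreasing with $\widetilde A(x,v)=t$, this forces $t/u<v$, i.e.\ $uv>t$. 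Your appeal to ``strict superlinear growth'' gestures at the right phenomenon but does not substitute for this computation, and the specific chain of inequalities you wrote down is false as stated.
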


\vspace{0.3cm}

\begin{defi}\label{delta_2}
We say that a function $A:\Omega\times[0,+\infty)\rightarrow[0,+\infty)$
satisfies the $\Delta_2(\Omega)$ condition, denoted by $A\in
\Delta_2(\Omega)$, if there exists a positive constant $K\geq1$ such that
\begin{equation*}
A(x,2t)\leq KA(x,t)\text{ for }x\in\Omega\text{ and
}t\in[0,+\infty).
\end{equation*}
We say that a function $A:\Omega\times[0,+\infty)\rightarrow[0,+\infty)$
satisfies the $\Delta_2(\Omega)$ condition near infinity if there exist positive constants $K\geq1$ and $t_0$ such that 
\begin{equation*}
A(x,2t)\leq KA(x,t)\text{ for }x\in\Omega\text{ and
}t\geq t_0.
\end{equation*}
\end{defi}

\vspace{0.3cm}
If $A(x,t)=A(t)$ is an $N$-function in Definition \ref{delta_2}, then $A\in\Delta_2(\Omega)$ if and only if $A$ satisfies the well-known $\Delta_2$ condition defined in \cite{Adams, Trudinger_1971}.

\vspace{0.3cm}

\begin{prop} [See \cite{Fan1}]\label{int_A}
Let $A\in N(\Omega)\cap\Delta_2(\Omega)$. Then the following
assertions hold,
\begin{enumerate}
\item $L^A(\Omega)=\{u:\,u \text{ is a measurable function, and
}\int_{\Omega}A(x,|u(x)|)\,\mathrm{d}x<+\infty\}$;
\item $\int_\Omega A(x,|u|)\,\mathrm{d}x<1 \text{ (resp. } =1; >1) \Longleftrightarrow \|u\|_A<1 \text{ (resp. } =1;
>1)$, where $u\in L^A(\Omega)$;
\item $\int_\Omega A(x,|u_n|)\,\mathrm{d}x\rightarrow0 \text{ (resp. } 1; +\infty) \Longleftrightarrow \|u_n\|_A\rightarrow0 \text{ (resp. }1;
+\infty)$, where $\{u_n\}\subset L^A(\Omega)$;
\item $u_n\rightarrow u$ in $L^A(\Omega)\Longrightarrow\int_\Omega \big|A(x,|u_n|)\,\mathrm{d}x-
A(x,|u|)\big|\,\mathrm{d}x\rightarrow0$ as $n\rightarrow\infty$;
\item If $\widetilde{A}$ also satisfies $\Delta_2(\Omega)$, then
\begin{equation*}
\bigg|\int_{\Omega}u(x)v(x)\,\mathrm{d}x\bigg|\leq2\|u\|_A\|v\|_{\widetilde
A},\Forall u\in L^A(\Omega),v\in L^{\widetilde A}(\Omega);
\end{equation*}
\item $a(\cdot,|u(\cdot)|)\in L^{\widetilde A}(\Omega)$ for every $u\in
L^A(\Omega)$.
\end{enumerate}
\end{prop}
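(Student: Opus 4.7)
The plan is to lean on the $\Delta_2(\Omega)$ condition, which in combination with the convexity of $A(x,\cdot)$ furnishes the key scaling bound $A(x,\lambda t)\le K^{\lceil \log_2\lambda\rceil}A(x,t)$ for $\lambda\ge 1$ and the submultiplicative estimate $A(x,\theta t)\le\theta A(x,t)$ for $\theta\in(0,1]$. I would prove the six assertions in order, since each subsequent item uses its predecessors.

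For (1), the non-trivial direction starts from $u\in L^A(\Omega)$, selects a $\lambda>0$ with $\int_\Omega A(x,|u|/\lambda)\,dx<\infty$, and then uses either monotonicity (if $\lambda\ge 1$) or the scaling bound (if $\lambda<1$) to transport integrability to $A(x,|u|)$. Assertion (2) follows from the definition of the Luxemburg norm: if $\|u\|_A<1$, pick $\lambda\in(\|u\|_A,1)$, use $A(x,|u|)\le\lambda A(x,|u|/\lambda)$ by convexity, and integrate; the reverse and boundary cases require $\Delta_2$ to guarantee continuity of $\lambda\mapsto\int_\Omega A(x,|u|/\lambda)\,dx$, so that one can approach the infimum without an integrability jump. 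Assertion (3) is then an immediate consequence of (2) applied after rescaling $u_n$ by an arbitrary $\varepsilon>0$.

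For (4), I would extract an almost-everywhere convergent subsequence, apply the pointwise convex/$\Delta_2$ bound $A(x,|u_n|)\le c_K\bigl[A(x,|u_n-u|)+A(x,|u|)\bigr]$, and invoke the generalized dominated convergence theorem, recovering the full sequence by the standard subsequence-of-subsequence argument. Item (5) is the Orlicz H\"older inequality: normalize by $\|u\|_A$ and $\|v\|_{\widetilde A}$, apply the pointwise Young inequality in Proposition \ref{Aa}(3) to the product of normalizations, integrate, and use (2) to bound each of the two resulting modulars by $1$, giving the factor $2$. Item (6) combines the equality case of Young's inequality, $A(x,t)+\widetilde A(x,a(x,t))=a(x,t)t$, with the pointwise upper bound $a(x,t)t\le A(x,2t)\le K A(x,t)$ coming from Proposition \ref{Aa}(1) and $\Delta_2$; this yields $\widetilde A(x,a(x,|u|))\le(K-1)A(x,|u|)$, so the left-hand side is integrable whenever $u\in L^A(\Omega)$, which by the definition of $L^{\widetilde A}(\Omega)$ (taking $\lambda=1$) gives $a(\cdot,|u(\cdot)|)\in L^{\widetilde A}(\Omega)$.

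The main obstacle I foresee is the sharp boundary behavior in (2): the Luxemburg infimum need not be attained in general, so transferring between strict inequality $<1$ and equality $=1$ for the modular relies precisely on $\Delta_2$ to rule out a discontinuous jump of the modular in the scaling parameter. The remaining items are routine Orlicz-space folklore adapted pointwise in $x$, where the generalized $N$-function structure only enters through measurability of $A(\cdot,t)$, which keeps all the integrals and iterations of Young's inequality well defined.
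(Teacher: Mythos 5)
The paper itself offers no proof of this proposition --- it is quoted from Fan \cite{Fan1} --- so there is no in-text argument to compare against; your outline is the standard modular-versus-norm proof and is essentially correct. The convexity bound $A(x,\theta t)\le\theta A(x,t)$ for $\theta\in(0,1]$ together with the iterated $\Delta_2$ bound handles (1)--(3), Young's inequality with its equality case handles (5) and (6) (indeed $\widetilde A(x,a(x,t))=a(x,t)t-A(x,t)\le A(x,2t)-A(x,t)\le(K-1)A(x,t)$, exactly as you say), and the Pratt/generalized dominated convergence argument handles (4). Two small points. First, in (1) your case labels are swapped: monotonicity transports integrability when $\lambda<1$ (then $|u|/\lambda\ge|u|$, so $A(x,|u|)\le A(x,|u|/\lambda)$), whereas for $\lambda\ge1$ monotonicity gives the inequality in the useless direction and it is there that you need the $\Delta_2$ iteration $A(x,\lambda s)\le K^{\lceil\log_2\lambda\rceil}A(x,s)$; this is purely cosmetic since you state the correct scaling bound at the outset. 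Second, in (4) the almost everywhere convergent subsequence should be extracted not from norm convergence directly but from the $L^1$ convergence $A(\cdot,|u_n-u|)\to0$ (available via (3) under $\Delta_2$), after which $u_n\to u$ a.e.\ along a subsequence because $A(x,\cdot)$ is an $N$-function for a.e.\ $x$; making this explicit closes the only step of your sketch that is not immediate. Neither point affects the validity of your approach.
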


\vspace{0.3cm}

The following assumptions will be used.

\begin{enumerate}
\item[$(P_1)$] $\Omega\subset\mathbb{R}^n(n\geq2)$ is a bounded domain with the cone property,
and $A\in N(\Omega)$;
\item[$(P_2)$]
$A:\overline{\Omega}\times\mathbb{R}\rightarrow[0,+\infty)$ is
continuous and $A(x,t)\in(0,+\infty)$ for $x\in\overline{\Omega}$
and $t\in(0,+\infty)$.
\end{enumerate}

\vspace{0.3cm}

Let $A$ satisfy $(P_1)$ and $(P_2)$. Denote by $A^{-1}(x,\cdot)$ the
inverse function of $A(x,\cdot)$.  We always assume that the
following condition holds.
\begin{enumerate}
\item[$(P_3)$] $A\in N(\Omega)$ satisfies
\begin{equation}\label{0_1}
\int_0^1\frac{A^{-1}(x,t)}{t^{\frac{n+1}{n}}}\d t<+\infty,\Forall
x\in\overline\Omega.
\end{equation}
\end{enumerate}

\vspace{0.3cm}

Under assumptions $(P_1)$, $(P_2)$ and $(P_3)$, for each
$x\in\overline{\Omega}$, the function
$A(x,\cdot):[0,+\infty)\rightarrow[0,+\infty)$ is a strictly
increasing homeomorphism. Define a function $A_*^{-1}:
\overline{\Omega}\times[0,+\infty)\rightarrow[0,+\infty)$ by
\begin{equation}\label{inversA_*}
A_*^{-1}(x,s)=\int_0^s\frac{A^{-1}(x,\tau)}{\tau^{\frac{n+1}{n}}}\,\mathrm{d}\tau\text{
for }x\in\overline{\Omega} \text{ and }s\in[0,+\infty).
\end{equation}
Then under the assumption $(P_3)$, $A_*^{-1}$ is well defined, and
for each $x\in\overline{\Omega}$, $A_*^{-1}(x,\cdot)$ is strictly
increasing, $A_*^{-1}(x,\cdot)\in C^1((0,+\infty))$ and the function
$A_*^{-1}(x,\cdot)$ is concave.

Set
\begin{equation}\label{T}
T(x)=\lim_{s\rightarrow+\infty}A_*^{-1}(x,s), \Forall
x\in\overline\Omega.
\end{equation}
Then $0<T(x)\leq +\infty$. Define an even function $A_*:
\overline{\Omega}\times\mathbb{R}\rightarrow[0,+\infty)$ by
\begin{equation*}
\begin{aligned}
A_*(x,t)=\left\{ \begin{array}{ll}
          s,  & \text{ if } x\in \overline{\Omega}, |t|\in[0,T(x))\text{ and }A_*^{-1}(x,s)=|t|,\\
          +\infty,   & \text{ for } x\in \overline{\Omega} \text{ and } |t|\geq T(x).
                \end{array}\right.
\end{aligned}
\end{equation*}
Then if $A\in N(\Omega)$ and $T(x)=+\infty$ for any
$x\in\overline{\Omega}$, it is well known that $A_*\in N(\Omega)$
(see \cite{Adams}). $A_*$ is called the \textit{Sobolev conjugate function
of $A$} (see \cite{Adams} for the case of Orlicz functions).

Let $X$ be a metric space and $f:X\rightarrow(-\infty,+\infty]$ be
an extended real-valued function. For $x\in X$ with $f(x)\in
\mathbb{R}$, the continuity of $f$ at $x$ is well defined. For $x\in
X$ with $f(x)=+\infty$, we say that $f$ is continuous at $x$ if
given any $M>0$, there exists a neighborhood $U$ of $x$ such that
$f(y)>M$ for all $y\in U$. We say that
$f:X\rightarrow(-\infty,+\infty]$ is continuous on $X$ if $f$ is
continuous at every $x\in X$. Define Dom$(f)=\{x\in X :
f(x)\in\mathbb{R}\}$ and denote by $C^{0,1}_{\text{loc}}(X)$ the set of all
locally Lipschitz continuous real-valued functions defined on $X$.

\vspace{0.3cm}

The following assumptions will also be used.
\begin{enumerate}
\item[$(P_4)$] $T:\overline{\Omega}\rightarrow[0,+\infty]$ is
continuous on $\overline{\Omega}$ and $T\in C^{0,1}_{\text{loc}}(\text{Dom}(T))$;
\item[$(P_5)_*$]
$A_*\in C^{0,1}_{\text{loc}}(\text{Dom}(A_*))$ and there exist three positive constants
$\delta_*$, $C_*$ and $t_*$ with $\delta_*<\frac{1}{n}$,
$0<t_*<\min_{x\in\overline{\Omega}}T(x)$ such that
\begin{equation}\label{priost}
|\nabla_x A_*(x,t)|\leq
C_*(A_*(x,t))^{1+\delta_*},
\end{equation}
for $x\in\Omega$ and $|t|\in[t_*,T(x))$ provided $\nabla_x A_*(x,t)$
exists.
\end{enumerate}

Let $A,B\in N(\Omega)$. We say that $A\ll B$ if for any $k
> 0$,
\begin{equation*}
\lim_{t\rightarrow+\infty}\frac{A(x,kt)}{B(x,t)}=0\text{ uniformly
for }x\in\Omega.
\end{equation*}

\vspace{0.3cm}



Next we present two embedding theorems for Musielak-Sobolev spaces 
developed by Fan in \cite{Fan2}.

\vspace{0.3cm}

\begin{thm}[See \cite{Fan2}, \cite{Liu_Zhao_15}]\label{imbedding}
Let $(P_1)$-$(P_4)$ and $(P_5)_*$ hold. Then
\begin{enumerate}
\item[(1)] There is a continuous imbedding $W^{1,A}(\Omega)\hookrightarrow
L^{A_*}(\Omega)$;
\item[(2)] Suppose that $B\in N(\Omega)$,
$B:\overline{\Omega}\times[0,+\infty)\rightarrow[0,+\infty)$ is
continuous, and $B(x,t)\in(0,+\infty)$ for $x\in\Omega$ and
$t\in(0,+\infty)$. If $B\ll A_*$, then there is a compact imbedding
$W^{1,A}(\Omega)\hookrightarrow\hookrightarrow L^B(\Omega)$.
\end{enumerate}
\end{thm}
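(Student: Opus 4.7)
The plan is to prove the continuous embedding (1) by a Riesz-potential / Hedberg-type argument adapted to generalized $N$-functions, and then deduce the compact embedding (2) via a Vitali equi-integrability argument.

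For (1), the cone property in $(P_1)$ supplies a Calder\'on-type extension operator $E\colon W^{1,A}(\Omega)\to W^{1,A}(\R^n)$, so it suffices to prove a modular inequality of the form
\begin{equation*}
\int_{\R^n}A_*\big(x,c\,|u(x)|\big)\,\d x \;\le\; \int_{\R^n}A\big(x,|\nabla u(x)|\big)\,\d x
\end{equation*}
for $u\in C_c^\infty(\R^n)$ and some universal $c>0$. The starting point is the Riesz representation $|u(x)|\le C_n\int_{\R^n}|x-y|^{1-n}|\nabla u(y)|\,\d y$; Hedberg's splitting trick, with the inner-ball radius chosen as a function of $x$ and of $M(|\nabla u|)(x)$, reduces the problem to estimating a weighted integral whose natural weight is precisely $\tau^{-(n+1)/n}$ as in \eqref{inversA_*}. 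This explains the definition of $A_*^{-1}$ and yields a pointwise bound of the schematic form $A_*(x,c|u(x)|)\le A(y,C|\nabla u(y)|)+(\textrm{tail})$, still with mixed base points $x$ and $y$.

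The decisive step is converting this mixed-base pointwise bound into an integrated inequality anchored at a single base point. Here condition $(P_5)_*$ is essential: the growth control $|\nabla_x A_*(x,t)|\le C_* A_*(x,t)^{1+\delta_*}$ together with the local Lipschitz hypothesis on $A_*$ permits a ``freezing'' argument in which the mismatch between $A_*(y,\cdot)$ and $A_*(x,\cdot)$ is absorbed into the leading term. The restriction $\delta_*<1/n$ is exactly what compensates the dimensional factor coming from the Riesz kernel, and verifying that this absorption is genuinely sub-leading is, to my mind, the most technically delicate point of the proof. Once the modular inequality is established, a normalization argument based on Proposition \ref{int_A} and the Luxemburg-norm characterization delivers $\|u\|_{A_*}\le C\|u\|_{1,A}$; since $A_*$ need not satisfy $\Delta_2(\Omega)$, this last implication must be handled via the convexity of $A_*(x,\cdot)$ rather than by direct modular comparison.

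For (2), let $\{u_n\}\subset W^{1,A}(\Omega)$ be a bounded sequence. By (1) it is uniformly bounded in $L^{A_*}(\Omega)$, and hence in $L^1(\Omega)$. The classical Rellich--Kondrachov theorem applied to $W^{1,1}(\Omega)$ on the cone-property domain $\Omega$ extracts a subsequence (still denoted $u_n$) converging to some $u$ in $L^1(\Omega)$ and a.e.\ on $\Omega$. The hypothesis $B\ll A_*$ says that for every $\varepsilon>0$ and every $k>0$ there is $t_0$ with $B(x,kt)\le \varepsilon A_*(x,t)$ uniformly in $x$ for $t\ge t_0$; combined with the uniform $L^{A_*}$-bound this yields equi-integrability of $\{B(x,|u_n-u|/\mu)\}_n$ for every $\mu>0$. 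Vitali's convergence theorem then gives $\int_\Omega B(x,|u_n-u|/\mu)\,\d x\to 0$ for every $\mu>0$, which by the definition of the Luxemburg norm is precisely $\|u_n-u\|_B\to 0$, yielding compactness.
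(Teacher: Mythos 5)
This theorem is not proved in the paper at all: it is imported verbatim from Fan \cite{Fan2} (see also \cite{Liu_Zhao_15}), so there is no ``paper proof'' to match, and your sketch must be judged on its own. Your part (2) is essentially the standard and correct argument (uniform $L^{A_*}$ norm bound $\Rightarrow$ modular bound $\int_\Omega A_*(x,|u_n|/M)\,\mathrm{d}x\le 1$, Rellich in $W^{1,1}$ for an a.e.\ convergent subsequence, then $B\ll A_*$ plus continuity of $B$ on $\overline\Omega\times[0,kt_0]$ gives equi-integrability and Vitali applies); this is how the compact embedding is obtained in \cite{Fan2}.

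Part (1), however, has genuine gaps. First, the opening reduction via a Calder\'on-type extension $E\colon W^{1,A}(\Omega)\to W^{1,A}(\R^n)$ is not available: the cone property does not yield an extension operator even for classical Sobolev spaces (a slit disk has the cone property but is not a $W^{1,1}$-extension domain), no extension theorem is known at the level of generality of $(P_1)$--$(P_5)_*$, and $A$ is only defined on $\overline\Omega\times\R$, so $W^{1,A}(\R^n)$ is not even meaningful without first extending $A$ while preserving all structural hypotheses. The known proofs (Donaldson--Trudinger/Adams in the Orlicz case, Fan \cite{Fan2} in the Musielak case) avoid extension entirely and work on the domain through the integral representation over finite cones together with a covering/freezing argument. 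Second, the step you yourself flag as ``the most technically delicate point''---absorbing the mismatch between $A_*(y,\cdot)$ and $A_*(x,\cdot)$ using \eqref{priost} with $\delta_*<\frac1n$---is precisely the mathematical content of the theorem, and in your sketch it is asserted rather than carried out; without it nothing is proved. Third, the target modular inequality $\int A_*(x,c|u|)\,\mathrm{d}x\le\int A(x,|\nabla u|)\,\mathrm{d}x$ with no additive constant is stronger than what holds in general (one only gets norm estimates of the form $\|u\|_{A_*}\le C\|u\|_{1,A}$, with the modular-to-norm passage handled as you note), and the hypothesis $(P_4)$ and the degenerate case $T(x)<+\infty$, where $A_*(x,t)=+\infty$ for $|t|\ge T(x)$ and the embedding locally lands in $L^\infty$ (cf.\ Remark \ref{SoRem}), are never used or addressed, although the theorem as stated covers that case. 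As it stands, the proposal is a plausible roadmap but not a proof of (1).
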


\vspace{0.3cm}

Based on the definition of $T(x)$ in \eqref{T} and Theorem \ref{imbedding}, we give the following remark.

\vspace{0.3cm}

\begin{rem}\label{SoRem}
\begin{enumerate}
\item[(1)] If $T(x)=+\infty$ for any $x\in\overline\Omega$, then $(P_4)$ is automatically satisfied for $A\in N(\Omega)$ satisfying $(P_1)$, $(P_2)$ and $(P_3)$;
\item[(2)] If $T(x)<+\infty$ for any $x\in\overline\Omega$, then $W^{1,A}(\Omega)\hookrightarrow L^{\infty}(\Omega)$ for $A\in N(\Omega)$ satisfying $(P_1)$-$(P_5)$.
\end{enumerate}
\end{rem}

\section{Local Morrey estimate}\label{Sec3}

In this section, we firstly prove the locally uniform Morrey estimate and then the H\"{o}lder continuity of functions in Musielak-Orlicz-Sobolev spaces. We need the following assumption.

\vspace{0.3cm}

\begin{enumerate}
\item[$(\widetilde{P_5})$]
There exist three positive constants
$\widetilde\delta$, $\widetilde C$ and $\widetilde t$ with $\widetilde\delta<\frac{1}{n}$,
$0<\widetilde t<\min_{x\in\overline{\Omega}}T(x)$ such that
\begin{equation}\label{priowt}
|\nabla_x \widetilde A(x,t)|\leq
\widetilde C(\widetilde A(x,t))^{1+\widetilde\delta},
\end{equation}
for any $x\in\Omega$ and $t\in[\widetilde t,T(x))$ provided $\nabla_x \widetilde A(x,t)$
exists.
\end{enumerate}

\vspace{0.3cm}

\begin{thm}[Locally Uniform Continuity]\label{main}
If the $A\in N(\Omega)\cap\Delta_2(\Omega)$ satisfies $(P_1)$-$(P_4)$, $\widetilde A\in\Delta_2(\Omega)$ satisfies $(\widetilde{P_5})$, and $T(x)<+\infty$ for any $x\in\overline\Omega$, then for any given $x\in\Omega$, there exist two constants $K=K(n)>0$ and $\sigma=\sigma(x,n)>0$ such that for any $u\in W^{1,A}(\Omega)$ and any $y_1,y_2\in Q_\sigma(x)$, the following estimate holds:
\begin{equation*}
|u(y_1)-u(y_2)|\leq K\|\nabla u\|_{A,\Omega}\int_{|y_1-y_2|^{-n}}^{+\infty}\frac{A^{-1}\big(x,\tau\big)}{\tau^{\frac{n+1}{n}}}\,\mathrm{d}\tau,
\end{equation*}
where $Q_\sigma(x)\subset\Omega$ is the cube centered at $x$ with the edge length $\sigma$. 
\end{thm}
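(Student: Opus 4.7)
The plan is to adapt the classical integral-representation proof of the Morrey embedding to the Musielak--Orlicz framework. First I reduce to smooth $u$ by density, which is available because both $A$ and $\widetilde A$ satisfy $\Delta_2(\Omega)$. For smooth $u$ the classical pointwise identity
\begin{equation*}
|u(y)-u_{Q'}|\le C_n\int_{Q'}\frac{|\nabla u(z)|}{|y-z|^{n-1}}\,\mathrm{d}z
\end{equation*}
holds on any cube $Q'\ni y$, where $u_{Q'}$ is the mean of $u$ on $Q'$. Given $y_1,y_2\in Q_\sigma(x)$ with $r=|y_1-y_2|$, I choose an auxiliary sub-cube $Q'\subset Q_\sigma(x)$ of side length comparable to $r$ containing both points, so that
\begin{equation*}
|u(y_1)-u(y_2)|\le C_n\sum_{i=1}^{2}\int_{Q'}\frac{|\nabla u(z)|}{|y_i-z|^{n-1}}\,\mathrm{d}z.
\end{equation*}

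Next I apply the generalized H\"older inequality for Musielak--Orlicz spaces (Proposition~\ref{int_A}(5), available because $\widetilde A\in\Delta_2(\Omega)$):
\begin{equation*}
\int_{Q'}\frac{|\nabla u(z)|}{|y_i-z|^{n-1}}\,\mathrm{d}z\le 2\,\|\nabla u\|_{A,\Omega}\,\bigg\|\frac{1}{|y_i-\cdot|^{n-1}}\bigg\|_{\widetilde A,Q'}.
\end{equation*}
The task then reduces to proving the Luxemburg-norm estimate
\begin{equation*}
\bigg\|\frac{1}{|y-\cdot|^{n-1}}\bigg\|_{\widetilde A,Q'}\le K(n)\int_{r^{-n}}^{+\infty}\frac{A^{-1}(x,\tau)}{\tau^{(n+1)/n}}\,\mathrm{d}\tau=:\lambda_{0},
\end{equation*}
which, by definition of the Luxemburg norm, is equivalent to verifying the modular bound $\int_{Q'}\widetilde A\bigl(z,\lambda_0^{-1}|y-z|^{-(n-1)}\bigr)\,\mathrm{d}z\le 1$.

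To establish this modular bound, I first use $(\widetilde{P_5})$ to \emph{freeze} the coefficient: integrating the differential inequality $|\nabla_x\widetilde A(x,t)|\le\widetilde C(\widetilde A(x,t))^{1+\widetilde\delta}$ along the segment from $x$ to $z$ yields $\widetilde A(z,t)\le C(n)\widetilde A(x,t)$ uniformly for $z\in Q_\sigma(x)$ and for $t$ in the relevant range, provided $\sigma=\sigma(x,n)$ is chosen small enough. Then I pass to polar coordinates around $y$, reducing the modular to a one-dimensional radial integral of the form $\int_0^{cr}\widetilde A(x,\lambda_0^{-1}\rho^{-(n-1)})\,\rho^{n-1}\,\mathrm{d}\rho$. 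The substitution $\tau=\rho^{-n}$, combined with Proposition~\ref{Aa}(1) (the bound $\widetilde A(x,t)\le t\,\widetilde a(x,t)\le\widetilde A(x,2t)$) and the Young-duality relation $A^{-1}(x,\tau)\widetilde A^{-1}(x,\tau)\sim\tau$ from Proposition~\ref{Aa}(2), transforms this radial integral into an expression bounded by $\lambda_0^{-1}\int_{r^{-n}}^{+\infty}A^{-1}(x,\tau)\tau^{-(n+1)/n}\,\mathrm{d}\tau$, which is at most $1$ once $K(n)$ is fixed sufficiently large.

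The main obstacle is the freezing step. Condition $(\widetilde{P_5})$ controls $\nabla_x\widetilde A(x,t)$ only for $t\in[\widetilde t,T(x))$, so I must check that the values $t=\lambda_0^{-1}|y-z|^{-(n-1)}$ arising in the modular all fall in this controlled region; combined with the fact that integrating $|\nabla_x\widetilde A|\le\widetilde C\widetilde A^{1+\widetilde\delta}$ only produces a useful comparison on segments of length comparable to $\widetilde A(x,t)^{-\widetilde\delta}$, this is precisely what forces $\sigma$ to depend quantitatively on $x$ (through $T(x)$ and the growth of $A^{-1}(x,\cdot)$). A secondary subtlety is the change-of-variables bookkeeping: matching the exponent $(n+1)/n$ and the lower limit $r^{-n}$ on the right-hand side requires tracking the Young duality against the polar Jacobian in $\mathbb{R}^n$, and it is this combination that produces exactly the Dini-type integral $\int_{r^{-n}}^{+\infty}A^{-1}(x,\tau)\tau^{-(n+1)/n}\,\mathrm{d}\tau$ in the conclusion.
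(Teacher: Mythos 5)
Your route (Riesz potential representation, then H\"older's inequality against the Luxemburg norm of the kernel $|y_i-\cdot|^{1-n}$) differs from the paper's, which never touches the singular kernel: the paper averages line integrals over the cube, applies H\"older on each dilated cube $Q_{t\sigma}(x)$ to the \emph{constant} function $1$, computes $\|1\|_{\widetilde A,Q_{t\sigma}(x)}\leq 1/\widetilde A^{-1}(x,\tfrac{t^{-n}\sigma^{-n}}{4})$ via a modular bound, converts to $A^{-1}$ by Proposition \ref{Aa}-(2), and only then integrates in $t$ to produce the Dini integral. As written, your version has two genuine gaps, both at the points you yourself flag as "the task" and "the main obstacle". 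First, the frozen-coefficient kernel estimate $\big\||y-\cdot|^{1-n}\big\|_{\widetilde A(x,\cdot),Q'}\leq K(n)\int_{r^{-n}}^{\infty}A^{-1}(x,\tau)\tau^{-(n+1)/n}\,\mathrm{d}\tau$ does not follow from "the substitution $\tau=\rho^{-n}$ plus Proposition \ref{Aa}(1)--(2)". The only pointwise information Proposition \ref{Aa}-(2) gives is $\widetilde A\big(x,\tau^{(n-1)/n}/\lambda_0\big)\lesssim w$ when $\tau^{(n-1)/n}/\lambda_0\leq w/A^{-1}(x,w)$, and with the natural choice $w\sim\tau$ this majorizes the modular by a multiple of $\int^{\infty}\tau^{-1}\,\mathrm{d}\tau$, which diverges logarithmically; already for $A(x,t)=t^{p}$ the true modular behaves like $(\mu/\lambda_0)^{p'}$ with $p'>1$, so the genuine proof must exploit the convergence of the tail integral (e.g.\ a dyadic decomposition of $Q'$ into annuli on which the kernel is essentially constant, estimating $\|1\|_{\widetilde A}$ on each annulus -- which is exactly the paper's computation rearranged -- or an integration by parts in the radial variable). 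This kernel lemma is true, but it is the heart of the matter and is simply asserted in your outline.

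Second, the freezing step is not merely a bookkeeping issue that "forces $\sigma$ to depend on $x$": the multiplicative comparison $\widetilde A(z,t)\leq C(n)\widetilde A(x,t)$ obtained by integrating $|\nabla_x\widetilde A|\leq\widetilde C\widetilde A^{1+\widetilde\delta}$ is only valid for $|z-x|\lesssim\big(\widetilde A(x,t)\big)^{-\widetilde\delta}$, and in your modular the argument $t=\lambda_0^{-1}|y-z|^{-(n-1)}$ is unbounded as $z\to y$, so no fixed cube $Q_\sigma(x)$ works uniformly in the relevant $t$-range; a uniform constant $C(n)$ is in general false. The paper avoids this entirely: for each scale it compares $\widetilde A(y,s)$ and $\widetilde A(x,s)$ \emph{additively}, at the single value $s=\widetilde A^{-1}(x,\tfrac{t^{-n}\sigma^{-n}}{4})$, and the error $C_0|y-x|\big(\widetilde A(x,s)\big)^{1+\delta_0}$ integrated over $Q_{t\sigma}(x)$ is of size $\sigma^{1-n\delta_0}$, small precisely because $\widetilde\delta<\tfrac1n$. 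If you try the same additive fix inside your kernel modular, the perturbation term involves $\int_{Q'}\big(\widetilde A(x,|y-z|^{1-n}/\lambda_0)\big)^{1+\widetilde\delta}\,\mathrm{d}z$, whose convergence near the singularity fails in general for a fixed $\widetilde\delta<\tfrac1n$ (e.g.\ power weights with exponent close to $n$), so the repair is not routine. In short: the skeleton is a legitimate alternative, but the two central estimates are exactly where the work lies, and neither is established by the tools you cite; the paper's proof is structured the way it is precisely to sidestep both difficulties.
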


\begin{proof}
Let $Q_\sigma(x)$ denote the closed cube in $\Omega$ centered at any given $x\in\Omega$, with the edge length $\sigma<\frac{2}{\sqrt n}\dist\{x,\partial\Omega\}$. The parameter $\sigma$ will be determined later. We obtain for any $y,z\in Q_\sigma(x)$
\begin{equation*}
\begin{aligned}
|u(y)-u(z)|&=\bigg|\int_0^1\frac{\mathrm{d}}{\mathrm{d}t}u\big(y+t(z-y)\big)\,\mathrm{d}t\,\bigg|\\
&\leq|y-z|\cdot\int_0^1\big|\nabla u\big(y+t(z-y)\big)\big|\,\mathrm{d}t\\
&\leq\sigma\sqrt n\int_0^1\big|\nabla u\big(y+t(z-y)\big)\big|\,\mathrm{d}t.
\end{aligned}
\end{equation*}
It follows that
\begin{equation}\label{ineq1}
\begin{aligned}
\bigg|u(y)-\frac{1}{\sigma^n}\int_{Q_\sigma(x)}u(z)\,\mathrm{d}z\,\bigg|&=\bigg|\frac{1}{\sigma^n}\int_{Q_\sigma(x)}\big(u(y)-u(z)\big)\,\mathrm{d}z\,\bigg|\\
&\leq\frac{\sqrt n}{\sigma^{n-1}}\int_{Q_\sigma(x)}\bigg(\int_0^1\big|\nabla u\big(y+t(z-y)\big)\big|\,\mathrm{d}t\bigg)\,\mathrm{d}z\\
&=\frac{\sqrt n}{\sigma^{n-1}}\int_0^1t^{-n}\bigg(\int_{Q_{t\sigma}(x)}|\nabla u(z')|\,\mathrm{d}z'\bigg)\,\mathrm{d}t.
\end{aligned}
\end{equation}
We estimate the term $\int_{Q_{t\sigma}(x)}\nabla u(z')\,\mathrm{d}z'$ in the right hand side of inequality \eqref{ineq1}. In fact, by H\"{o}lder inequality we get
\begin{equation}\label{ineq2}
\int_{Q_{t\sigma}(x)}|\nabla u(z')|\,\mathrm{d}z'\leq2\|\nabla u\|_{A,Q_{t\sigma}(x)}\|1\|_{\widetilde A,Q_{t\sigma}(x)}
\end{equation}
To estimate $\|1\|_{\widetilde A,Q_{t\sigma}(x)}$ on the right hand side of the above inequality, from $(\widetilde{P_5})$ it is easy to see that there exist positive constants constant $C_0$, $\sigma_0=\sigma_0(x)$ and $\delta_0<\frac{1}{n}$ such that for any $s>0$ and any $y\in Q_{\sigma_0}(x)$,
\begin{equation*}
\big|\widetilde A(y,s)-\widetilde A(x,s)\big|\leq\big|(y-x)\cdot\nabla_x \widetilde A(x,s)\big|+\frac{1}{4}
\leq C_0\,|y-x|\,\big(\widetilde A(x,s)\big)^{1+\delta_0}+\frac{1}{4}.
\end{equation*} 
Then for $t\in[0,1]$, we can see that
\begin{equation*}
\begin{aligned}
&\int_{Q_{t\sigma}(x)}\widetilde A\big(y,\widetilde A^{-1}(x,\frac{t^{-n}\sigma^{-n}}{4})\big)\,\mathrm{d}y\\
\leq&\bigg|\int_{Q_{t\sigma}(x)}\widetilde A\big(x,\widetilde A^{-1}(x,\frac{t^{-n}\sigma^{-n}}{4})\big)\,\mathrm{d}y\bigg|\\
&\quad\quad+\int_{Q_{t\sigma}(x)}\bigg|\widetilde A\big(y,\widetilde A^{-1}(x,\frac{t^{-n}\sigma^{-n}}{4})\big)-\widetilde A\big(x,\widetilde A^{-1}(x,\frac{t^{-n}\sigma^{-n}}{4})\big)\bigg|\,\mathrm{d}y\\
\leq&\frac{1}{4}+C_0\int_{Q_{t\sigma}(x)}|y-x|\cdot\bigg(\widetilde A\big(x,\widetilde A^{-1}(x,\frac{t^{-n}\sigma^{-n}}{4})\big)\bigg)^{1+\delta_0}\,\mathrm{d}y+\frac{1}{4}\\
\leq&\frac{1}{2}+\frac{C_0\sqrt n}{2}\int_{Q_{t\sigma}(x)}t\sigma\cdot\frac{t^{-n-n\delta_0}\sigma^{-n-n\delta_0}}{4^{1+\delta_0}}\,\mathrm{d}y\\
=&\frac{1}{2}+\frac{C_0\sqrt n}{2\cdot4^{1+\delta_0}}\sigma^{1-n\delta_0}.
\end{aligned}
\end{equation*}
Take 
\begin{equation*}
\sigma:=\min\bigg\{\sigma_0,\bigg(\frac{4^{1+\delta_0}}{C_0\sqrt n}\bigg)^{\frac{1}{1-n\delta_0}},\frac{1}{\sqrt n}\dist\{x,\partial\Omega\}\bigg\}.
\end{equation*}
From the above inequality we can get 
\begin{equation*}
\int_{Q_{t\sigma}(x)}\widetilde A\big(y,\widetilde A^{-1}(x,\frac{t^{-n}\sigma^{-n}}{4})\big)\,\mathrm{d}y\leq\frac{1}{2}+\frac{1}{2}=1.
\end{equation*}
Then Proposition \ref{int_A}-(2) yields 
\begin{equation*}
\bigg\|\widetilde A^{-1}(x,\frac{t^{-n}\sigma^{-n}}{4})\bigg\|_{\widetilde A,Q_{t\sigma}(x)}\leq1
\end{equation*}
or equivalently
\begin{equation}\label{ineq3}
\|1\|_{\widetilde A,Q_{t\sigma}(x)}\leq\frac{1}{\widetilde A^{-1}(x,\frac{t^{-n}\sigma^{-n}}{4})}.
\end{equation}
By \eqref{ineq2} and \eqref{ineq3}, we conclude 
\begin{equation}\label{ineq4}
\int_{Q_{t\sigma}(x)}|\nabla u(z')|\,\mathrm{d}z'\leq\frac{2}{\widetilde A^{-1}(x,\frac{t^{-n}\sigma^{-n}}{4})}\|\nabla u\|_{A,Q_{t\sigma}(x)}.
\end{equation}
By Proposition \ref{Aa}-(2) for any $x\in\Omega$ and any $w\geq0$ the following inequality holds
\begin{equation*}
w\leq A^{-1}(x,w)\widetilde A^{-1}(x,w)\leq2w,
\end{equation*}
\eqref{ineq4} implies
\begin{equation*}
\int_{Q_{t\sigma}(x)}\nabla u(z')\,\mathrm{d}z'\leq8t^n\sigma^nA^{-1}\big(x,\frac{t^{-n}\sigma^{-n}}{4}\big)\|\nabla u\|_{A,Q_{t\sigma}(x)}.
\end{equation*}
Then \eqref{ineq1} and the above inequality imply
\begin{equation}\label{ineq5}
\begin{aligned}
\bigg|u(y)-\frac{1}{\sigma^n}\int_{Q_\sigma(x)}u(z)\,\mathrm{d}z\,\bigg|&\leq8\sqrt n\sigma\|\nabla u\|_{A,Q_{t\sigma}(x)}\int_0^1A^{-1}\big(x,\frac{t^{-n}\sigma^{-n}}{4}\big)\,\mathrm{d}t\\
&=\frac{8}{4^{1/n}\sqrt n}\|\nabla u\|_{A,\Omega}\int_{(4\sigma)^{-n}}^{+\infty}\frac{A^{-1}\big(x,\tau\big)}{\tau^{\frac{n+1}{n}}}\,\mathrm{d}\tau.
\end{aligned}
\end{equation}
If $y_1,y_2\in\Omega$ and $\sigma=4^{-1}|y_1-y_2|<1$, then there exists an $x\in\Omega$  and a cube $Q_\sigma(x)\subset\Omega$ with $y_1,y_2\in Q_\sigma$. And \eqref{ineq5} implies 
\begin{equation*}
|u(y_1)-u(y_2)|\leq\frac{16}{4^{1/n}\sqrt n}\|\nabla u\|_{A,\Omega}\int_{|y_1-y_2|^{-n}}^{+\infty}\frac{A^{-1}\big(x,\tau\big)}{\tau^{\frac{n+1}{n}}}\,\mathrm{d}\tau.
\end{equation*}
\end{proof}

\vspace{0.3cm}

From the proof of Theorem \ref{main}, we give the following remark.

\vspace{0.3cm}

\begin{rem}
The following problem is open: Does the similar estimate in Theorem \ref{main} hold without $A,\widetilde A\in\Delta_2(\Omega)$?
\end{rem}
\vspace{0.3cm}

We give the definition of $\mu(\cdot)$-H\"{o}lder continuous functions on $\Omega$.

\vspace{0.3cm}

\begin{defi}
Given a positive, continuous and increasing function $\mu:\Omega\times\mathbb{R}^+\rightarrow\mathbb{R}^+$, a function $u\in C(\overline\Omega)$ is said to be $\mu(\cdot)$-H\"{o}lder continuous on $\Omega$, denoted by $u\in C^{0,\mu(\cdot)}(\overline\Omega)$, if for every $x\in\Omega$ there exists $r=r(x)$ with $0<\frac{r}{2}<\text{dist}(x,\partial\Omega)$, such that
\begin{equation*}
\sup_{y_1,y_2\in Q_r(x),
y_1\not=y_2}\frac{|u(y_1)-u(y_2)|}{\mu(x,|y_1-y_2|)}<+\infty,
\end{equation*}
where $Q_r(x)\subset\Omega$ is the cube centered at $x$ with the edge length $r$.
\end{defi}


\vspace{0.3cm}

By Remark \ref{SoRem} and Theorem \ref{main}, we obtain the following theorem.

\vspace{0.3cm}

\begin{thm}[$\mu(\cdot)$-H\"{o}lder Continuity]\label{Holder}
Suppose $A\in N(\Omega)\cap\Delta_2(\Omega)$ satisfies $(P_1)$-$(P_4)$, $(P_5)_*$, $\widetilde A\in\Delta_2(\Omega)$ satisfies $(\widetilde{P_5})$ and $T(x)<+\infty$ for any $x\in\overline\Omega$. For $s>0$ set 
\begin{equation*}
\mu(x,s):=\int_{s^{-n}}^{+\infty}\frac{A^{-1}\big(x,\tau\big)}{\tau^{\frac{n+1}{n}}}\,\mathrm{d}\tau.
\end{equation*}
If $u\in W^{1,A}(\Omega)$, then $u\in C^{0,\mu(\cdot)}(\overline\Omega)$; Moreover, for any given $x\in\Omega$, there exists two constants $K=K(n)>0$ and $\sigma=\sigma(x,n)>0$ such that for any $u\in W^{1,A}(\Omega)$ and any $y_1,y_2\in Q_\sigma(x)$, the following estimate holds:
\begin{equation*}
|u(y_1)-u(y_2)|\leq K\|u\|_{1,A,\Omega}\mu(x,|y_1-y_2|),
\end{equation*}
where $Q_\sigma(x)\subset\Omega$ is the cube centered at $x$ with the edge length $\sigma$. 
\end{thm}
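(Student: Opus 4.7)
The plan is to obtain Theorem \ref{Holder} as an almost immediate consequence of Theorem \ref{main}, together with Remark \ref{SoRem}-(2). First I would observe that the displayed inequality in the conclusion is the conclusion of Theorem \ref{main} up to a change of notation: with $\mu(x,s)$ defined as in the statement, Theorem \ref{main} already gives
\begin{equation*}
|u(y_1)-u(y_2)|\leq K\|\nabla u\|_{A,\Omega}\,\mu(x,|y_1-y_2|)
\end{equation*}
for $y_1,y_2 \in Q_\sigma(x)$, and the desired estimate then follows from the trivial bound $\|\nabla u\|_{A,\Omega} \leq \|u\|_A + \|\nabla u\|_A = \|u\|_{1,A,\Omega}$. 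So the Morrey-type pointwise estimate requires no new work.

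The substantive point is verifying that $u$ actually belongs to $C^{0,\mu(\cdot)}(\overline{\Omega})$, whose definition demands a representative in $C(\overline{\Omega})$. Here I would use that under the full hypotheses of Theorem \ref{Holder} (including $(P_5)_*$ and $T(x)<+\infty$ on $\overline{\Omega}$) Remark \ref{SoRem}-(2) supplies the continuous embedding $W^{1,A}(\Omega) \hookrightarrow L^\infty(\Omega)$, so $u$ is essentially bounded. The local Morrey estimate from Theorem \ref{main}, applied at each interior point $x \in \Omega$, shows that $u$ admits a representative with modulus of continuity $K\|u\|_{1,A,\Omega}\mu(x,\cdot)$ on the cube $Q_\sigma(x)$; since $(P_3)$ forces $\mu(x,s)\to 0$ as $s\to 0^+$, this representative is continuous on $\Omega$.

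To promote continuity on $\Omega$ to continuity on $\overline{\Omega}$, I would invoke the cone property of $\Omega$ built into $(P_1)$. At a boundary point $x_0 \in \partial\Omega$, pick an open cone $C_{x_0} \subset \Omega$ with vertex $x_0$; for any sequence $y_k \in C_{x_0}$ with $y_k \to x_0$, one applies Theorem \ref{main} on a small cube containing $y_k$ and $y_\ell$ centered at an interior point (the cone guarantees that such a cube exists inside $\Omega$ once $k,\ell$ are large enough). The estimate then shows $\{u(y_k)\}$ is Cauchy, and the same estimate shows that the limit is independent of the chosen approaching sequence. This extends $u$ continuously to $\overline{\Omega}$.

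The main obstacle I anticipate is this last step of passing from an interior modulus of continuity to genuine continuity on $\overline{\Omega}$, because Theorem \ref{main} only supplies estimates on cubes $Q_\sigma(x)$ with $\sigma < \tfrac{2}{\sqrt n}\dist(x,\partial\Omega)$, so $\sigma(x,n) \to 0$ as $x \to \partial\Omega$ and one must bridge the gap to the boundary; the cone property is precisely what lets one chain estimates along a sequence approaching $x_0$ inside $\Omega$. Everything else (the pointwise estimate and the local $C^{0,\mu(\cdot)}$ character on interior cubes) is essentially a bookkeeping translation of Theorem \ref{main}.
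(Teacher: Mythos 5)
Your first two steps coincide with the paper's own (one-line) derivation: the paper obtains Theorem \ref{Holder} by combining Theorem \ref{main} with Remark \ref{SoRem}, and the displayed estimate is indeed just the conclusion of Theorem \ref{main} rewritten in terms of $\mu$ together with the trivial bound $\|\nabla u\|_{A,\Omega}\le\|u\|_{1,A,\Omega}$. One small misattribution in your interior-continuity step: the fact that $\mu(x,s)\to0$ as $s\to0^+$ does not come from $(P_3)$, which only controls the integrand near $\tau=0$; it comes from the hypothesis $T(x)<+\infty$, since $T(x)=\int_0^{+\infty}A^{-1}(x,\tau)\tau^{-\frac{n+1}{n}}\,\mathrm{d}\tau$ by \eqref{inversA_*} and \eqref{T}, and $\mu(x,s)$ is the tail of this convergent integral.

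The genuine gap is in your boundary step, which is precisely where you go beyond what the paper writes. Theorem \ref{main} applies only to pairs of points lying in the particular cube $Q_{\sigma(x)}(x)$, whose edge $\sigma(x,n)$ is bounded above both by $\frac{1}{\sqrt n}\dist\{x,\partial\Omega\}$ and by the radius $\sigma_0(x)$ produced by $(\widetilde{P_5})$. Two points $y_k,y_\ell\in C_{x_0}$ near the vertex may hug the lateral surface of the cone and thus lie at distance from $\partial\Omega$ which is arbitrarily small compared with $|y_k-y_\ell|$; in that situation no single admissible cube centered at an interior point contains both, contrary to your parenthetical claim. One is then forced to chain through cubes over a whole range of scales between $\dist\{y_k,\partial\Omega\}$ and $|y_k-x_0|$, and the resulting oscillation bound is a series of the type $\sum_j\mu\big(x_j,c\,2^{-j}\epsilon\big)$, whose smallness as $\epsilon\to0$ is not guaranteed by the hypotheses: $\mu$ need not be of power type nor satisfy any Dini or doubling condition, and one would additionally need $\sigma_0(\cdot)$ and $\mu(\cdot,s)$ to be uniformly controlled along the chain as the centers approach $x_0$. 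So the requirement $u\in C(\overline\Omega)$ in the definition of $C^{0,\mu(\cdot)}(\overline\Omega)$ is not actually established by your sketch. To be fair, the paper itself offers no argument for this point either: Remark \ref{SoRem}-(2) yields only $W^{1,A}(\Omega)\hookrightarrow L^{\infty}(\Omega)$, so the passage from the interior estimate to continuity up to $\partial\Omega$ is left implicit there as well; your proposal matches the published argument exactly on everything the paper actually proves, and the unresolved issue is the one the paper glosses over.
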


\section{On the assumption $(P_5)_*$ and $(\widetilde{P_5})$}\label{Sec4}

In this section, under the assumption $(P_1)$, $(P_2)$ and $(P_3)$ the following assumption will be used.

\vspace{0.3cm}

\begin{enumerate}
\item[$(P_5)$] $A\in C^{0,1}_{\text{loc}}\big(\overline\Omega\times[0,+\infty)\big)$ and there exist positive constants $\delta_0<\frac{1}{n}$, $C_0$, and $t_0$ such that
\begin{equation}\label{priori}
|\nabla_x A(x,t)|\leq
C_0(A(x,t))^{1+\delta_0},
\end{equation}
for any $x\in\Omega$ and $t\geq t_0$ provided $\nabla_x A(x,t)$
exists.
\end{enumerate}

\vspace{0.3cm}

We will discuss the relation of the assumptions $(P_5)$, $(P_5)_*$ and $(\widetilde{P_5})$. In fact, under some reasonable assumptions, if the increasing estimation \eqref{priori} holds for $A$, then the similar increasing estimations hold for $A_*$ and $\widetilde A$, i.e., the estimations \eqref{priost} and \eqref{priowt} hold.

\vspace{0.3cm}

\begin{prop}\label{p5}
If $A$ satisfies $(P_1)$, $(P_2)$, $(P_3)$, the following conclusions hold:
\begin{enumerate}
\item[(1)] If $A$ satisfies $\Delta_2(\Omega)$ near infinity, then $(P_5)\Rightarrow(P_5)_*$;
\item[(2)] If $A\in C^{1,1}_{\text{loc}}(\overline\Omega\times[0,+\infty))$, $a(x,t):=A'_t(x,t)$ is strictly increasing on the variable $t$ for a.e. fixed $x\in\Omega$ and $\widetilde A$ satisfies $\Delta_2(\Omega)$ near infinity, then $(P_5)\Rightarrow(\widetilde{P_5})$. 
\end{enumerate}
\end{prop}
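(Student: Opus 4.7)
The plan is to transfer the hypothesized bound $|\nabla_x A(x,t)|\le C_0(A(x,t))^{1+\delta_0}$ successively to $A^{-1}$, then to $A_*^{-1}$, and finally to $A_*$ via implicit differentiation. Starting from $A(x,A^{-1}(x,\tau))=\tau$, differentiating in $x$ and using $(P_5)$ together with $a(x,t)\ge A(x,t)/t$ (Proposition~\ref{Aa}(1)) gives
\[
|\nabla_x A^{-1}(x,\tau)|=\frac{|\nabla_x A(x,A^{-1}(x,\tau))|}{a(x,A^{-1}(x,\tau))}\le \frac{C_0\tau^{1+\delta_0}}{\tau/A^{-1}(x,\tau)}=C_0\tau^{\delta_0}A^{-1}(x,\tau).
\]
Differentiating the formula \eqref{inversA_*} under the integral sign and bounding $\tau^{\delta_0}\le s^{\delta_0}$ for $\tau\in[0,s]$ yields $|\nabla_x A_*^{-1}(x,s)|\le C_0 s^{\delta_0}A_*^{-1}(x,s)$. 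Since $\partial_s A_*^{-1}(x,s)=A^{-1}(x,s)/s^{(n+1)/n}$, differentiating $A_*^{-1}(x,A_*(x,t))=t$ in $x$ and solving for $\nabla_x A_*$ produces
\[
|\nabla_x A_*(x,t)|\le C_0\,s^{\delta_0+(n+1)/n}\,\frac{A_*^{-1}(x,s)}{A^{-1}(x,s)},\qquad s:=A_*(x,t).
\]

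\textbf{Closing Part (1).} The remaining ingredient is the Sobolev-type comparison $A_*^{-1}(x,s)\cdot s^{1/n}\le C\,A^{-1}(x,s)$ for $s$ large, uniformly in $x\in\overline\Omega$. Here the $\Delta_2(\Omega)$ condition on $A$ near infinity enters decisively: it produces an exponent $\alpha>0$ with $A^{-1}(x,s\sigma)\le 2\sigma^{\alpha}A^{-1}(x,s)$ for all $\sigma\in(0,1]$, and the substitution $\tau=s\sigma$ in \eqref{inversA_*} then yields
\[
A_*^{-1}(x,s)\le 2\,s^{-1/n}A^{-1}(x,s)\int_0^1\sigma^{\alpha-(n+1)/n}\,d\sigma,
\]
whose convergence (requiring $\alpha>1/n$) is the quantitative content of $(P_3)$. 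Plugging this into the previous display gives $|\nabla_x A_*(x,t)|\le C_*(A_*(x,t))^{1+\delta_0}$ for $t$ large enough, i.e., $(P_5)_*$ with $\delta_*=\delta_0$; the $C^{0,1}_{\mathrm{loc}}$ regularity of $A_*$ is inherited from that of $A$ through the same implicit relations.

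\textbf{Part (2).} Here the convex-conjugate structure produces a clean cancellation. Since $a(x,\cdot)$ is strictly increasing, the supremum in $\widetilde A(x,s)=\sup_t(st-A(x,t))$ is uniquely attained at $t=\widetilde a(x,s)$, characterized by $a(x,\widetilde a(x,s))=s$, so
\[
\widetilde A(x,s)=s\,\widetilde a(x,s)-A(x,\widetilde a(x,s)).
\]
Differentiating in $x$, the two terms containing $\nabla_x\widetilde a$ combine with coefficient $s-a(x,\widetilde a(x,s))=0$ and therefore cancel, leaving the identity $\nabla_x\widetilde A(x,s)=-\nabla_x A(x,\widetilde a(x,s))$. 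Applying $(P_5)$ gives $|\nabla_x\widetilde A(x,s)|\le C_0(A(x,\widetilde a(x,s)))^{1+\delta_0}$. By Young's equality $A(x,\widetilde a(x,s))+\widetilde A(x,s)=s\widetilde a(x,s)$, Proposition~\ref{Aa}(1) applied to $\widetilde A$ (giving $s\widetilde a(x,s)\le\widetilde A(x,2s)$), and $\widetilde A\in\Delta_2(\Omega)$ near infinity, one concludes $A(x,\widetilde a(x,s))\le(K-1)\widetilde A(x,s)$, whence $(\widetilde{P_5})$ holds with $\widetilde\delta=\delta_0$.

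\textbf{Main obstacle.} Part (2) reduces to a one-line Legendre-transform identity followed by Young-inequality bookkeeping and should be routine. The subtle step is the uniform-in-$x$ Sobolev-conjugate comparison in Part (1): extracting from the $\Delta_2$ condition a power bound on $A^{-1}$ with exponent $\alpha>1/n$ (so that the critical integral converges) is the technical heart of the proof, and it is the only place where $(P_3)$ and $\Delta_2(\Omega)$ are used in tandem.
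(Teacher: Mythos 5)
Your Part (2) is essentially the paper's own argument: the Legendre-transform cancellation $\nabla_x\widetilde A(x,s)=-\nabla_xA\big(x,\widetilde a(x,s)\big)$, followed by $(P_5)$, Young's equality, Proposition \ref{Aa}(1) and the $\Delta_2(\Omega)$ condition for $\widetilde A$ near infinity, is exactly the computation in the paper (which, like you, only secures the bound for $s$ above some threshold), so that half is fine. The problem is Part (1), which the paper does not prove at all --- it simply cites Proposition 3.1 of \cite{Fan2} --- and your substitute argument breaks at the closing comparison $A_*^{-1}(x,s)\,s^{1/n}\le C\,A^{-1}(x,s)$ for large $s$. That inequality is false precisely in the regime this paper cares about, namely $T(x)=\lim_{s\to+\infty}A_*^{-1}(x,s)<+\infty$ (assumed in Theorems \ref{main} and \ref{Holder}, and realized by the variable exponent example of Section \ref{Sec5} with $p(x)>n$): there $A_*^{-1}(x,s)\to T(x)\in(0,+\infty)$ while $A^{-1}(x,s)$ grows only like $s^{1/p(x)}$ with $p(x)>n$, so $s^{1/n}A_*^{-1}(x,s)/A^{-1}(x,s)\to+\infty$.

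Correspondingly, the way you try to justify that comparison does not work: $\Delta_2(\Omega)$ near infinity gives $A^{-1}(x,\sigma s)\lesssim\sigma^{\alpha}A^{-1}(x,s)$ only with $\alpha=1/\log_2K$, and nothing forces $\alpha>1/n$ (for $A(x,t)=t^{p}$ with $p>n$ one gets $\alpha=1/p<1/n$); and $(P_3)$ is a pointwise integrability condition at $\tau=0$ for each fixed $x$, not a uniform power bound valid at all scales, so it cannot supply the missing $\alpha>1/n$ --- your phrase that this convergence ``is the quantitative content of $(P_3)$'' is where the gap hides. (A smaller point of the same kind: the bound $|\nabla_xA^{-1}(x,\tau)|\le C_0\tau^{\delta_0}A^{-1}(x,\tau)$ is only available for $\tau\ge A(x,t_0)$, so differentiating \eqref{inversA_*} under the integral sign from $0$ requires the small-$\tau$ piece to be handled separately.) Your reduction $|\nabla_xA_*(x,t)|\le C_0\,s^{\delta_0+(n+1)/n}\,A_*^{-1}(x,s)/A^{-1}(x,s)$ with $s=A_*(x,t)$ is a reasonable starting point, but closing it when $T(x)<+\infty$ needs a different bookkeeping that exploits $\delta_0<1/n$ inside the integral rather than the false Sobolev-conjugate comparison; this is precisely what the cited Proposition 3.1 of \cite{Fan2} supplies, and as written your proof of Part (1) is incorrect rather than merely incomplete.
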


\vspace{0.3cm}

\begin{proof}
The conclusion of (1) has been proved in Proposition 3.1 of \cite{Fan2}. We will prove (2). In fact, since $a(x,s)$ is strictly increasing, we can conclude $\widetilde a=a^{-1}$. Since $A\in C^{1,1}(\overline\Omega\times[0,+\infty))$, it is easy to verify that $\nabla_xa(x,t)$ and $\nabla_x\widetilde a(x,t)$ exist for a.e. $x\in\Omega$. And by 
\begin{equation*}
\widetilde A(x,s)=s\widetilde a(x,s)-A\big(x,\widetilde a(x,s)\big)
\end{equation*}
we can get
\begin{equation*}
\begin{aligned}
\nabla_x \widetilde A(x,s)&=\nabla_x \big(s\widetilde a(x,s)-A\big(x,\widetilde a(x,s)\big)\big)\\
&=s\nabla_x\widetilde a(x,s)-\nabla_yA\big(y,\widetilde a(x,s)\big)\bigg|_{y=x}-a(x,\widetilde a(x,s))\nabla_x\widetilde a(x,s)\\
&=s\nabla_x\widetilde a(x,s)-\nabla_yA\big(y,\widetilde a(x,s)\big)\bigg|_{y=x}-s\nabla_x\widetilde a(x,s)\\
&=-\nabla_yA\big(y,\widetilde a(x,s)\big)\bigg|_{y=x}.
\end{aligned}
\end{equation*}
The above equalities, Proposition \ref{Aa} and $(P_5)$ imply that there exist positive constants $\delta_0<\frac{1}{n}$, $C_0$, and $t_0$ such that for any $x\in\Omega$ and $s\geq t_0$,
\begin{equation}\label{ineq6}
\begin{aligned}
\big|\nabla_x \widetilde A(x,s)\big|&\leq C_0\big|A(x, \widetilde a(x,s))\big|^{1+\delta_0}\\
&=C_0\big|s\widetilde a(x,s)-\widetilde A\big(x,s\big)\big|^{1+\delta_0}\\
\text{(by Proposition \ref{Aa}-(1))}&\leq C_0\big(\widetilde A(x,2s)-\widetilde A\big(x,s\big)\big)^{1+\delta_0}.
\end{aligned}
\end{equation}
Since $\widetilde A$ satisfies $\Delta_2(\Omega)$ near infinity, there exist positive constants $K_1$ and $t_1$ such that
\begin{equation*}
\widetilde A(x,2s)\leq K_1\widetilde A(x,s)
\end{equation*}
for $x\in\Omega$ and $s\geq t_0$. Then \eqref{ineq6} implies for any $x\in\Omega$ and $s\geq \max\{t_0,t_1\}$
\begin{equation*}
\big|\nabla_x \widetilde A(x,s)\big|\leq C_0\max\{K_1-1,(K_1-1)^{1+\frac{1}{n}}\}\big(\widetilde A\big(x,s\big)\big)^{1+\delta_0},
\end{equation*}
where the constant $\delta_0<\frac{1}{n}$. Then $(\widetilde{P_5})$ holds.
\end{proof}

\section{Examples}\label{Sec5}

In this section, we give several examples appeared in mathematical literature. And we prove these examples satisfies the assumptions in Theorem \ref{Holder}. In this section $\Omega\subset\mathbb{R}^n$ is a bounded domain with the cone property.

\vspace{0.3cm}

\begin{ex}[Variable exponent case]
Let $p\in
C^{1,1}(\overline\Omega)$ and $\text{sup}_{x\in{\overline\Omega}}p(x)=:p_+\geq p(y)\geq
p_-:=\text{inf}_{x\in{\overline\Omega}}p(x)>n\geq2$
for any $y\in\overline\Omega$. Define
$A:\overline\Omega\times[0,+\infty)\rightarrow[0,+\infty)$ by
\begin{equation*}
A(x,t)=\frac{t^{p(x)}}{p(x)}.
\end{equation*}
\end{ex}

\vspace{0.3cm}

It is readily checked that $A$ satisfies $(P_1)$, $(P_2)$ and
$(P_3)$. It is easy to see that $p\in C^{1,1}(\overline\Omega)$
implies $A\in C^{1,1}(\overline\Omega)$ and
\begin{equation}\label{ExAStar}
A_*^{-1}(x,s)=\frac{np(x)}{n-p(x)}(p(x))^{\frac{1}{p(x)}}s^{\frac{n-p(x)}{np(x)}}.
\end{equation}
Then $T(x)=0$ for any $x\in\overline\Omega$ and $(P_4)$ is verified.

In addition, for $x\in\Omega$,
\begin{equation*}
\nabla_x A(x,t)=t^{p(x)}\ln t\nabla p(x).
\end{equation*}
Since for any $\epsilon>0$, $\frac{\ln t}{t^\epsilon}\rightarrow0$ as $t\rightarrow+\infty$, we conclude that there
exist constants $\delta_1<\frac{1}{n}$, $c_1$ and $t_1$ such that
\begin{equation}
\bigg|\frac{\partial A(x,t)}{\partial x_j}\bigg|\leq
c_1A^{1+\delta_1}(x,t),
\end{equation}
for all $x\in\Omega$ and $t\geq t_1$. Combining
$A\in\Delta_2(\Omega)$, from Proposition \ref{p5}-(1), it is
easy to see that $(P_5)_*$ is verified.

It is readily checked that for any $x\in\overline\Omega$ and $s\geq0$,
\begin{equation*}
\widetilde A(x,s)=\frac{s^{q(x)}}{q(x)},
\end{equation*}
where $q(x)$ satisfies $\frac{1}{p(x)}+\frac{1}{q(x)}=1$. Then $\widetilde A$ satisfies $\Delta_2(\Omega)$ near infinity. Together with $a(x,t)=t^{p(x)-1}$ being strictly increasing on the variable $t$ for fixed $x\in\Omega$, we can see that $(\widetilde{P_5})$ is verified by  Proposition \ref{p5}-(2).

One can verify that in this case $\mu(\cdot)$ in Theorem \ref{Holder} is of the following form
\begin{equation*}
\mu(x,s)=\frac{np(x)}{p(x)-n}(p(x))^{\frac{1}{p(x)}}s^{1-\frac{n}{p(x)}}.
\end{equation*}

\vspace{0.3cm}

\begin{ex}[Log type case]
Let $p\in
C^{1,1}(\overline\Omega)$ and $\text{sup}_{x\in{\overline\Omega}}p(x)=:p_+\geq p(y)\geq
p_-:=\text{inf}_{x\in{\overline\Omega}}p(x)>n\geq2$
for any $y\in\overline\Omega$.  Define
$A:\overline\Omega\times[0,+\infty)\rightarrow[0,+\infty)$ by
\begin{equation*}
A(x,t)=t^{p(x)}\log(1+t),\text{ for }x\in\overline\Omega\text{ and
}t>0.
\end{equation*}
\end{ex}

It is obvious that $A$ satisfies $(P_1)$, $(P_2)$ and $(P_3)$. And for $x\in\overline\Omega$ and $t>0$ big enough, 
\begin{equation}\label{logy}
t^{p^-}\leq A(x,t)\leq t^{p^++1},
\end{equation} 
which implies that $T(x)=0$ for all $x\in\overline\Omega$. Thus $(P_4)$ is verified. Since $p\in C^{1,1}(\overline\Omega)$ and $A\in C^{1,1}(\overline\Omega\times[0,+\infty))$, by Proposition \ref{p5}-(1), $A_*\in C^{0,1}_{\text{loc}}(\overline\Omega\times[0,+\infty))$. Combining $A$ satisfies $\Delta_2(\Omega)$ near infinity, it is easy to see that the assumption $(P_5)_*$ is satisfied. By \eqref{logy} and Proposition \ref{p5}-(2), $\widetilde{(P_5)}$ can be verified. Unfortunately the explicit expression of $\mu(\cdot)$ does not exists in this case.

\vspace{0.3cm}

\begin{ex}[Double phase case]
Let $b\in
C^{1,1}(\overline\Omega)$ and $0<\text{inf}_{x\in{\overline\Omega}}\alpha(x)=:\alpha_-\leq \alpha(y)\leq
\alpha_+:=\text{sup}_{x\in{\overline\Omega}}\alpha(x)$
for any $y\in\overline\Omega$.  Define
$A:\overline\Omega\times[0,+\infty)\rightarrow[0,+\infty)$ by
\begin{equation*}
A(x,t)=t^{p}+\alpha(x)t^q,\text{ for }x\in\overline\Omega\text{ and
}t>0,
\end{equation*}
where $p$ and $q$ are constants with $q>p>n\geq2$.
\end{ex}

\vspace{0.3cm}

It is obvious that $A$ satisfies $(P_1)$, $(P_2)$ and $(P_3)$. And for $x\in\overline\Omega$ and $t>0$ big enough, 
\begin{equation}\label{logy}
A(x,t)\geq t^{p}+\alpha_-\cdot t^q,
\end{equation} 
which implies that $T(x)=0$ for all $x\in\overline\Omega$. Then $(P_4)$ is verified. Since $\alpha\in C^{1,1}(\overline\Omega)$ and $A\in C^{1,1}(\overline\Omega\times[0,+\infty))$, by Proposition \ref{p5}-(1), $A_*\in C^{0,1}_{\text{loc}}(\overline\Omega\times[0,+\infty))$. Combining $A$ satisfies $\Delta_2(\Omega)$ near infinity, the assumption $(P_5)_*$ is satisfied. By \eqref{logy} and Proposition \ref{p5}-(2), $\widetilde{(P_5)}$ can be verified. In this case, the explicit expression of $\mu(\cdot)$ does not exists.

\renewcommand{\baselinestretch}{0.1}
\bibliographystyle{plain}


\end{document}